\newtheorem{theorem}{Theorem}[section]
\newtheorem{corollary}[theorem]{Corollary}
\newtheorem{lemma}[theorem]{Lemma}
\def\ker{\mathop{\mathrm{Ker}}\nolimits}
\def\im{\mathop{\mathrm{Im}}\nolimits}
\def\dom{\mathop{\mathrm{Dom}}\nolimits}
\def\id{\mathop{\mathrm{id}}\nolimits}
\def\auto{\mathop{\mathrm{Aut}}\nolimits}
\def\endo{\mathop{\mathrm{End}}\nolimits}
\def\cen{\mathop{\mathrm{C}}\nolimits}
\def\N{\mathbb N}
\def\O{\mathcal{O}}
\def\PO{\mathcal{PO}}
\def\POI{\mathcal{POI}}
\def\PODI{\mathcal{PODI}}
\def\POD{\mathcal{POD}}
\def\OD{\mathcal{OD}}
\def\PT{\mathcal{PT}}
\def\T{\mathcal{T}}
\def\I{\mathcal{I}}
\def\S{\mathcal{S}}
\newcommand{\lastpage}{\addresss}
\newcommand{\addresss}{\small \sf  

\noindent{\sc De Biao Li}, 
School of Mathematics and Statistics, 
Lanzhou University, 
Lanzhou, 
730000 Gansu, 
P. R. China;\\
e-mail: lidb19@lzu.edu.cn

\medskip

\noindent{\sc V\'\i tor H. Fernandes}, 
Center for Mathematics and Applications (CMA), 
FCT NOVA and Department of Mathematics, FCT NOVA, 
Faculdade de Ci\^encias e Tecnologia, 
Universidade Nova de Lisboa, 
Monte da Caparica, 
2829-516 Caparica, 
Portugal; 
e-mail: vhf@fct.unl.pt. 
}
\author{De Biao Li and V\'\i tor H. Fernandes\footnote{This work is funded by national funds through the FCT - Funda\c c\~ao para a Ci\^encia e a Tecnologia, 
I.P., under the scope of the projects UIDB/00297/2020 and UIDP/00297/2020 (Center for Mathematics and Applications).}
}
\title{Endomorphisms of semigroups of monotone transformations}
\begin{document}

\maketitle

\vspace*{-1cm}

\begin{abstract}
In this paper, 
we characterize the monoid of endomorphisms of the semigroup of all monotone full transformations of a finite chain, as well as the monoids of endomorphisms of the semigroup of all monotone partial transformations and of the semigroup of all monotone partial permutations of a finite chain. 
\end{abstract}

\medskip

\noindent{\small 2020 \it Mathematics subject classification: \rm 20M10, 20M20}

\noindent{\small\it Keywords: \rm order-preserving, order-reversing, monotone, transformations, endomorphisms.}

\section*{Introduction}

For $n\in\N$, let $\Omega_n$ be a finite set with $n$ elements.
Denote by $\PT_n$ 
the monoid (under composition) of all partial transformations of $\Omega_n$.
The submonoid of $\PT_n$ of all full transformations of $\Omega_n$ and the (inverse)
submonoid of all partial permutations (i.e. partial injective transformations) of $\Omega_n$ are denoted by $\T_n$ and $\I_n$,
respectively.
Also, denote by $\S_n$ the symmetric group on $\Omega_n$, i.e. the subgroup of $\PT_n$ of all permutations of $\Omega_n$. 

Now, suppose that $\Omega_n$ is a finite
chain with $n$ elements, e.g. $\Omega_n=\{1<2<\cdots <n\}$. 
We say that a transformation $s$ in $\PT_n$ is \textit{order-preserving} [\textit{order-reversing}] if $x\leqslant y$ implies
$xs\leqslant ys$ [$xs\geqslant ys$], for all $x,y \in \dom(s)$. 
A transformation that is either order-preserving or order-reversing is also called \textit{monotone}.
Observe that the
product of two order-preserving transformations or of two
order-reversing transformations is order-preserving and the
product of an order-preserving transformation by an
order-reversing transformation is order-reversing. 
Moreover, the product of two monotone transformations is monotone. 

Denote by $\PO_n$ the submonoid of $\PT_n$ of all
partial order-preserving transformations of $\Omega_n$.
As usual, $\O_n$ denotes the monoid $\PO_n\cap\T_n$ of all
full transformations of $\Omega_n$ that preserve the order. This monoid has
been largely studied, namely in \cite{Aizenstat:1962,Gomes&Howie:1992,Howie:1971,Lavers&Solomon:1999}.
The injective counterpart of $\O_n$ is the inverse monoid $\POI_n=\PO_n\cap\I_n$,
which is considered, for example,
in \cite{Cowan&Reilly:1995,Fernandes:1997,Fernandes:1998,Fernandes:2001,Fernandes:2002, Fernandes:2002survey}.

Wider classes of monoids are obtained when we take monotone transformations. 
In this way, we get
$\POD_n$, the submonoid of $\PT_n$ of all partial monotone 
transformations. Naturally, we
may also consider $\OD_n=\POD_n\cap\T_n$ and
$\PODI_n=\POD_n\cap\I_n$, the monoids of all monotone
full transformations and of all monotone partial permutations, respectively. 
These monoids were studied, for instance, in 
\cite{Araujo&etal:2011, 
Delgado&Fernandes:2004, 
Dimitrova&Koppitz:2008,
Dimitrova&Koppitz:2009, 
Fernandes:2008, 
Fernandes&Gomes&Jesus:2004,
Fernandes&Gomes&Jesus:2005,
Fernandes&Gomes&Jesus:2005b,
Fernandes&Gomes&Jesus:2011, 
Fernandes&Quinteiro:2012,
Fernandes&Quinteiro:2016, 
Gyudzhenov&Dimitrova:2006}. 

The following diagram, with respect to the inclusion relation and
where {\bf1} denotes the trivial monoid, clarifies the
relationship between these various semigroups:
\begin{center}
\begin{picture}(94,100)(120,135)
\put(157,137){$\bullet$} \put(160,140){\line(1,1){30}}
\put(160,140){\line(-1,1){30}} \put(147,137){\footnotesize {\bf1}}
%
\put(127,167){$\bullet$} \put(130,170){\line(1,1){30}}
\put(130,170){\line(0,1){40}} \put(100,166){\footnotesize
$\POI_n$}
%
\put(187,167){$\bullet$} \put(190,170){\line(-1,1){30}}
\put(190,170){\line(0,1){40}} \put(195,165){\footnotesize
$\O_n$}
%
\put(127,207){$\bullet$} \put(130,210){\line(1,1){30}}
\put(93,206){\footnotesize $\PODI_n$}
%
\put(187,207){$\bullet$} \put(190,210){\line(-1,1){30}}
\put(195,206){\footnotesize $\OD_n$}
%
\put(157,197){$\bullet$} \put(160,200){\line(0,1){40}}
\put(135,197){\footnotesize $\PO_n$}
%
\put(157,237){$\bullet$} \put(165,240){\footnotesize
$\POD_n$}

\end{picture}
\end{center}

\medskip

Describing automorphisms and endomorphisms of transformation semigroups is a classical problem. 
For instance, they have been determined by Schein and Teclezghi \cite{Schein&Teclezghi:1997,Schein&Teclezghi:1998} 
for $\I_n$ in 1997 and for $\T_n$ in 1998,  
and for the Brauer-type semigroups by Mazorchuk \cite{Mazorchuk:2002} in 2002. 
Regarding semigroups of order-preserving transformations, 
in 1962 A\u{\i}zen\v{s}tat  \cite{Aizenstat:1962} gave a presentation for $\O_n$ from which it can be deduced that $\O_n$ has only one non-trivial automorphism, for $n\geqslant2$. 
More recently, 
in 2010 Fernandes et al. \cite{Fernandes&al:2010} found a description of the endomorphisms of $\O_n$ and 
in 2019 Fernandes and Santos \cite{Fernandes&al:2019} determined the endomorphisms of $\POI_n$ and of $\PO_n$. 
Descriptions of automorphisms of semigroups of order-preserving transformations and of some of their extensions, such as semigroups of monotone transformations or semigroups of orientation-preserving/reversing transformations, can be found in \cite{Araujo&etal:2011}. 

\smallskip 

In this paper, we give descriptions of the monoids of endomorphisms of the remain semigroups of the above diagram, 
namely of $\OD_n$, $\PODI_n$ and $\POD_n$, for $n\geqslant2$. 
Moreover, we also determine the number of endomorphisms of each of these semigroups.

\section{Preliminaries}\label{prelim} 

Let $S$ be a semigroup. For completion, we recall the definition of
the Green equivalence relations $\mathscr{R}$, $\mathscr{L}$, $\mathscr{H}$ and
$\mathscr{J}$: for all $u, v\in S$,
\begin{description}
\item $u\mathscr{R} v$ if and only if  $uS^1=vS^1$, 
\item $u\mathscr{L} v$ if and only if $S^1u=S^1v$, 
\item $u\mathscr{H} v$ if and only if $u\mathscr{L} v$ and $u\mathscr{R} v$, 
\item $u\mathscr{J} v$ if and only if $S^1uS^1=S^1vS^1$
\end{description}
(as usual, $S^1$ denotes $S$ with identity adjoined \textit{if necessary}). 
Associated to the Green relation $\mathscr{J}$ there is a quasi-order
$\leqslant_\mathscr{J}$ on $S$ defined by
$$
u\leqslant_\mathscr{J} v \text{ if and only if }  S^1uS^1\subseteq S^1vS^1,
$$
for all $u, v\in S$. Notice that, for every $u, v\in S$, we have
$u\,\mathscr{J}\,v$ if and only if $u\leqslant_\mathscr{J}v$ and
$v\leqslant_\mathscr{J}u$. Denote by $J_{u}$ the $\mathscr{J}$-class
of the element $u\in S$. As usual, a partial order relation
$\leqslant_\mathscr{J}$ is defined on the set $S/\mathscr {J}$ by
setting $J_{u}\leqslant_\mathscr{J}J_{v}$ if and only if
$u\leqslant_\mathscr{J}v$, for all $u, v\in S$. For $u, v\in S$, we
write $u<_\mathscr{J}v$ and also $J_{u}<_\mathscr{J}J_{v}$ if
and only if $u\leqslant_\mathscr{J}v$ and $(u, v)\not\in\mathscr{J}$. 
Recall that any endomorphism of semigroups preserves Green relations and the quasi-order $\leqslant_\mathscr{J}$. 

\smallskip 

Given a semigroup $S$, we denote by $E(S)$ the set of its idempotents.
An \textit{ideal} of $S$ is a subset $I$ of $S$ such that
$S^1IS^1\subseteq I$. By convenience, we admit the empty set as an
ideal. A \textit{Rees congruence} of $S$ is a congruence associated
to an ideal of $S$: if $I$ is an ideal of $S$, the Rees congruence
$\rho_I$ is defined by $(s,t)\in\rho_I$ if and only if $s=t$ or
$s,t\in I$, for all $s, t\in S$. 
The set of congruences of $S$, the group of automorphisms of $S$ and the monoid of endomorphisms of $S$ are denoted by
$\mathrm{Con}(S)$, $\auto(S)$ and $\endo(S)$, respectively. 

\medskip 

Let $S\in\{\O_n,\POI_n,\PO_n,\OD_n,\PODI_n,\POD_n\}$. 
We have the following descriptions of the Green relations in the semigroup $S$: 
\begin{description}
\item $s\mathscr{L} t$ if and only if $\im(s)=\im(t)$, 
\item $s\mathscr{R} t$ if and only if $\ker(s)=\ker(t)$, 
\item $s\mathscr{J} t$ if and only if $|\im(s)|=|\im(t)|$, and 
\item $s\mathscr{H} t$ if and only if $\ker(s)=\ker(t)$ and  $\im(s)=\im(t)$,
\end{description} 
for all $s,t\in S$. If $S=\POD_n$ or $\PODI_n$, for the Green relation $\mathscr{R}$, we have, even more simply, 
\begin{description}
\item
$s\mathscr{R} t$ if and only if $\dom(s)=\dom(t)$, 
\end{description} 
for all $s,t\in S$.

\smallskip 

Consider the following order-reversing full transformation
$$
\tau=\left(
\begin{array}{ccccc}
1&2&\cdots&n-1&n\\
n&n-1&\cdots&2&1
\end{array}
\right) 
$$
(a permutation of order two). 

Recall that
$\OD_n=\langle \O_n, \tau\rangle$, $\PODI_n=\langle
\POI_n, \tau \rangle$ and $\POD_n=\langle \PO_n, \tau \rangle$.

Next, let $T\in\{\O_n, \POI_n, \PO_n\}$ and 
$M=\langle T, \tau\rangle$. Then both $T$ and $M$ are regular monoids (moreover, if
$T=\POI_n$ then $T$ and $M$ are inverse monoids) and $E(M)=E(T)$.

Remember also that, for the partial order $\leqslant_\mathscr{J}$, the
quotients $M/\mathscr{J}$ and $T/\mathscr{J}$ are chains (with $n+1$ elements
for $T=\POI_n$ and $T=\PO_n$ and with $n$ elements for $T=\O_n$).
More precisely, if $S\in\{T,M\}$, then
$$S/\mathscr{J}=\{J_0^S <_\mathscr{J} J_1^S <_\mathscr{J} \cdots
<_\mathscr{J} J_n^S \}$$ when $T\in\{\POI_n, \PO_n\}$; and
$$
S/\mathscr{J}=\{J_1^S <_\mathscr{J} \cdots <_\mathscr{J} J_n^S\}
$$ when $T=\O_n$. Here 
$$
J_k^S=\{s\in S\mid |\im(s)|=k\},
$$ with
$k$ suitably defined.

For $S\in\{T,M\}$ and $0\leqslant  k\leqslant  n$, let 
$I_k^S=\{s\in S\mid |\im(s)|\leqslant k\}$. 
Clearly $I_k^S$ is an ideal of $S$. Since $S/\mathscr{J}$ is a
chain, it follows that
$$
\{I_k^S\mid 0\leqslant  k\leqslant  n\}
$$
is the set of all ideals of $S$. 
See \cite{Gomes&Howie:1992,Fernandes&Gomes&Jesus:2005,Fernandes:2001}. 

\smallskip

Observe that $T$ is an aperiodic monoid and that the
$\mathscr{H}$-classes of $M$ contained in $J_k^M$ have precisely
two elements (one of them belonging to $T$ and the other belonging
to $M\setminus T$) when $k\geqslant 2$. If $k=1$ then such
$\mathscr{H}$-classes are trivial.

In \cite{Aizenstat:1962b},  A\v\i zen\v stat proved that the congruences of $\O_n$ are exactly the identity and its $n$ Rees congruences. 
See \cite{Lavers&Solomon:1999} for another proof. 
Also, the congruences of $\POI_n$ and $\PO_n$ are exactly their $n+1$ Rees congruences. 
This has been shown, for $\POI_n$, by Derech \cite{Derech:1991} and, independently, by Fernandes \cite{Fernandes:2001} 
and, for $\PO_n$, by Fernandes et al. \cite{Fernandes&Gomes&Jesus:2005}. In short, 
$$
\{\rho_{I_k^T}\mid 0\leqslant  k\leqslant n\}
$$
is the set $\mathrm{Con}(T)$ of all
congruences of $T$. 

\smallskip 

Concerning the monoid $M\in\{\OD_n, \PODI_n, \POD_n\}$, 
for $1\leqslant  k\leqslant  n$, we can define a congruence
$\pi_k$ on $M$ by: for all $s,t\in M$, $s\:\pi_k \:t$ if and only
if
\begin{enumerate}
  \item
  $s=t$; or
  \item
  $s,t\in I_{k-1}^M$; or
  \item
  $s,t\in J_k^M$ and $s\:\mathcal{H}\:t$
\end{enumerate}
(see \cite[Lemma 4.2]{Fernandes:2000}).
For $0\leqslant  k\leqslant n$, denote by $\rho_k^M$ the Rees congruence
$\rho_{I_k^M}$ associated to the ideal $I_k^M$ of $M$ and by
$\omega$ the universal congruence of $M$. Clearly, for $n\geqslant 2$, we
have
$$1=\pi_1 \subsetneq \rho_1^M \subsetneq \pi_2 \subsetneq
\rho_2^M \subsetneq \cdots \subsetneq \pi_n \subsetneq
\rho_n^M=\omega.$$
Furthermore, Fernandes et al. \cite{Fernandes&Gomes&Jesus:2005} proved that, for $n\geqslant 2$, 
these are precisely all congruences of $M$:
$$
\mathrm{Con}(M)=\{1=\pi_1,\rho_1^M,\pi_2,\rho_2^M,\ldots,
\pi_n,\rho_n^M=\omega\}. 
$$

\medskip 

Let $S$ be a semigroup and let $u$ be a unit of any monoid containing $S$ such that $u^{-1}Su\subseteq S$. 
Then, it is easy to check that the mapping 
$\phi^u:S\longrightarrow S$ defined by $s\phi^u=u^{-1}su$, for all $s\in S$, is an automorphism of $S$, 
which is called an \textit{inner automorphism} of $S$.
If necessary, in case of ambiguity, we represent $\phi^u$ by $\phi^u_S$.

\smallskip 

Let us consider again the permutation $\tau$ of $\Omega_n$ (recall that $\tau\in\POD_n$) defined above. 
Then, for every $S\in\{\O_n,\POI_n,\PO_n,\OD_n,\PODI_n,\POD_n\}$, it is easy to verify that $\tau^{-1}S\tau=S$. 
Therefore, for $n\geqslant 2$, the permutation $\tau$ induces a non-trivial automorphism $\phi^\tau$ of $S$. 

\medskip

Now, let $S$ be a finite monoid and let $G$ be its group of units. 
Let $e$ and $f$ be two idempotents of $S$ such that $ef=fe=f$. 
Then, clearly, the mapping $\phi:S\longrightarrow S$ defined by $G\phi=\{e\}$ 
and $(S\setminus G)\phi=\{f\}$ is an endomorphism (of semigroups) of $S$. 
More generally, let $f$ be an idempotent of $S$, $H$ a subgroup of $S$ such that $fH=\{f\}=Hf$ and 
$\varphi: G\longrightarrow H$ a homomorphism.  
Then, it is also clear that the mapping $\phi:S\longrightarrow S$ defined by $\phi|_G=\varphi$ and $(S\setminus G)\phi=\{f\}$ is an endomorphism (of semigroups) of $S$. 

\smallskip 

On the other hand, observe that, given a semigroup $S$, an endomorphism $\phi$ of $S$ and an idempotent-generated subsemigroup $V$ of $S$ such that $E(V)=E(S)$, as idempotents apply to idempotents, we have $V\phi\subseteq V$ and so
we may consider the restriction $\phi|_V$ of $\phi$ to $V$ as an endomorphism of $V$. 

\smallskip 

Recall that the \textit{rank} of a transformation $s\in\PT_n$ is the size of its image, i.e. $|\im(s)|$. 

\medskip 

Next, we are going to construct a certain family of endomorphisms for each of the semigroups $\POI_n$, $\PO_n$, $\PODI_n$ and $\POD_n$. 

\smallskip 

For $k\in\{2,3,\ldots,n\}$, consider the following two transformations of $\O_n$ with rank $n-1$: 
$$
f_k=\left(\begin{array}{ccccccc}
1&\cdots&k-1&k&k+1&\cdots&n\\
1&\cdots&k-1&k-1&k+1&\cdots&n
\end{array}\right)
\quad\text{and}\quad 
g_k=\left(\begin{array}{ccccccc}
1&\cdots&k-2&k-1&k&\cdots&n\\
1&\cdots&k-2&k&k&\cdots&n
\end{array}\right).  
$$
It is easy to check that each of the $n-1$ $\mathscr{R}$-classes of transformations of rank $n-1$ 
of $\O_n$ or $\OD_n$ (recall that $E(\OD_n)=E(\O_n)$) 
has exactly two idempotents, 
namely $f_k$ and $g_k$, for some $k\in\{2,3,\ldots,n\}$.  See \cite[Lemma 2.4]{Fernandes&al:2019}. 
On the other hand, for $i\in\{1,2,\ldots,n\}$, let
$$
e_i=\left(\begin{array}{cccccc}
1&\cdots&i-1&i+1&\cdots&n\\
1&\cdots&i-1&i+1&\cdots&n
\end{array}\right), 
$$
which is an idempotent of $\POI_n$ of rank $n-1$. 
It is clear that each of the $n$ $\mathscr{R}$-classes of transformations of rank $n-1$ of $\POI_n$ or $\PODI_n$ (recall that $E(\PODI_n)=E(\POI_n)$) has exactly one  idempotent, namely $e_i$, for some $i\in\{1,2,\ldots,n\}$. 
Therefore, since $J_{n-1}^{\POD_n}=J_{n-1}^{\OD_n}\cup J_{n-1}^{\PODI_n}$ (a disjoint union), by the above observations, 
$J_{n-1}^{\POD_n}$ contains $n$ $\mathscr{R}$-classes exactly with one idempotent and $n-1$ $\mathscr{R}$-classes exactly with two idempotents. 

\smallskip 

Now, let us define a mapping $\phi_1:\POD_n\longrightarrow\POD_n$ by: 
\begin{enumerate}
\item $1\phi_1=1$, $\tau\phi_1=\tau$; 
\item For $s\in J_{n-1}^{\PODI_n}$, let $s\phi_1=\binom{i}{j}$, where $i,j\in\{1,2,\ldots,n\}$ are the unique indices such that $e_i\mathscr{R} s\mathscr{L} e_j$; 
\item For $s\in J_{n-1}^{\OD_n}$, let $s\phi_1=\left(\begin{array}{cc}
k-1&k\\
k_s&k_s
\end{array}\right)$, where $\{k_s\}=X_n\setminus\im(s)$ and $k\in\{2,3,\ldots,n\}$ is the unique index such that $s\mathscr{R} f_k$ (and $s\mathscr{R} g_k$); 
\item $I_{n-2}^{\POD_n}\phi_1=\{\emptyset\}$.
\end{enumerate}

Clearly, $\phi_1$ is well defined mapping. Moreover, it is a routine matter to show that $\phi$ is an endomorphism of $\POD_n$ which admits $\pi_{n-1}$ as kernel. Furthermore, it is clear that, for every $S\in\{\POI_n,\PO_n,\PODI_n\}$, we have $S\phi_1\subseteq S$ and so the restriction $\phi_1|_S$ of $\phi_1$ to $S$ may also be seen as an endomorphism of $S$.  

\medskip 

Next, recall that a subsemigroup $S$ of $\PT_n$ is said to be $\S_n$-\textit{normal} if $\sigma^{-1}S\sigma\subseteq S$, for all $\sigma\in\S_n$. 
In 1975, Sullivan \cite[Theorem 2]{Sullivan:1975} proved that $\auto(S)\simeq\S_n$, for any $\S_n$-normal subsemigroup $S$ of $\PT_n$ 
containing a constant mapping. 
Moreover, we obtain an isomorphism $\Phi:\S_n\longrightarrow\auto(S)$ by defining $\sigma\Phi=\phi^\sigma_S$,  
where $\phi^\sigma_S$ is the inner automorphism of $S$ associated to $\sigma$, for all $\sigma\in \S_n$. 

\medskip 

Let  $S\in\{\POI_n,\PO_n,\PODI_n,\POD_n\}$. 

Let $I_1^1=I_{1}^{S}\cup\{1\}$. It is clear that $I_1^1$ is an $\S_n$-normal subsemigroup of $\PT_n$ containing a constant mapping. 
Therefore, by Sullivan's Theorem, we have 
$$
\auto(I_1^1)=\{\phi^\sigma_{I_1^1}\mid \sigma\in\S_n\}\simeq\S_n. 
$$

Now, let $I_1^\tau=I_{1}^{S}\cup\{1,\tau\}$. Then $I_1^\tau$ is a subsemigroup of $\PT_n$ admitting $\{1,\tau\}$ as group of units. 
Therefore, for any automorphism $\phi$ of $I_1^\tau$, we must have $\tau\phi=\tau$, whence $I_1^1\phi=I_1^1$ and so 
the restriction $\phi|_{I_1^1}$ of $\phi$ to $I_1^1$, as a mapping from $I_1^1$ to $I_1^1$, is an automorphism of $I_1^1$. 
Hence $\phi|_{I_1^1}=\phi^\sigma_{I_1^1}$, for some $\sigma\in\S_n$. Let $i,j\in\{1,2,\ldots,n\}$. Then 
\begin{equation*}\label{centralizer}
\textstyle\binom{(n-i+1)\sigma}{j\sigma}=\sigma^{-1}\tau\binom{i}{j}\sigma=\left(\tau\binom{i}{j}\right)\phi=\tau\phi\binom{i}{j}\phi 
=\tau\sigma^{-1}\binom{i}{j}\sigma=\binom{n-i\sigma+1}{j\sigma},
\end{equation*}
whence $(n-i+1)\sigma=n-i\sigma+1$, i.e. $i\tau\sigma=i\sigma\tau$. So 
$$
\sigma\in\cen_{\S_n}(\tau)=\{\xi\in\S_n\mid \xi\tau=\tau\xi\},
$$
the centralizer of $\tau$ in $\S_n$. 
Notice that, $\sigma^{-1}\tau\sigma=\tau$ (and so $\sigma^{-1}I_1^\tau\sigma=I_1^\tau$), whence $\phi$ is the inner automorphism of $I_1^\tau$ associated to $\sigma$. 
On the other hand, given $\sigma\in\cen_{\S_n}(\tau)$, we clearly have $\sigma^{-1}I_1^\tau\sigma=I_1^\tau$ and so we obtain an inner automorphism $\phi_{I_1^\tau}^\sigma$ of $I_1^\tau$. Moreover, it is easy to conclude now that the mapping 
$\Phi:\cen_{\S_n}(\tau)\longrightarrow\auto(I_1^\tau)$ defined by $\sigma\Phi=\phi^\sigma_{I_1^\tau}$, 
for all $\sigma\in\cen_{\S_n}(\tau)$, 
is an isomorphism. Since $\tau$ has $\lfloor\frac{n}{2}\rfloor$ non-trivial cycles each of which with length $2$, we have 
$$
\textstyle|\auto(I_1^\tau)|=|\cen_{\S_n}(\tau)|=\lfloor\frac{n}{2}\rfloor\hskip-.1em!\, 2^{\lfloor\frac{n}{2}\rfloor}
$$
(see \cite[Proposition 23 (page 133)]{Ledermann:1976}). 

For  $S\in\{\POI_n,\PO_n\}$, let $\phi_\sigma^S=\phi_1|_S\phi^\sigma_{I_1^1}$, considered as a mapping from $S$ to $S$, 
for all $\sigma\in\S_n$. Clearly,  
$$
\{\phi_\sigma^S\mid \sigma\in\S_n\}
$$
is a set of $n!$ distinct endomorphisms of $S$. 
On the other hand, for  $S\in\{\PODI_n,\POD_n\}$, 
let $\phi_\sigma^S=\phi_1|_S\phi^\sigma_{I_1^\tau}$, considered as a mapping from $S$ to $S$, 
for all $\sigma\in\cen_{\S_n}(\tau)$. Clearly,  
$$
\{\phi_\sigma^S\mid \sigma\in\cen_{\S_n}(\tau)\}
$$
is a set of $\lfloor\frac{n}{2}\rfloor\hskip-.1em!\, 2^{\lfloor\frac{n}{2}\rfloor}$ distinct endomorphisms of $S$.

\medskip 

For general background on semigroups and standard notations, we refer the reader to Howie's
book \cite{Howie:1995}. 

\medskip 

From now on, we consider $n\geqslant2$, whenever not explicitly mentioned.

\section{Endomorphisms of $\OD_n$, $\PODI_n$ and $\POD_n$}\label{main} 

We begin this section by recalling the following results by Fernandes et al. \cite{Fernandes&al:2010} 
and Fernandes and Santos \cite{Fernandes&al:2019} above mentioned. 

\begin{theorem}[{\cite[Theorem 1.1]{Fernandes&al:2010} and \cite[Theorem 3.3]{Fernandes&al:2019}}] \label{endon}
For $n\geqslant2$, let $T\in\{\O_n,\POI_n,\PO_n\}$ and $\phi: T\longrightarrow T$ be any mapping. Then $\phi$ is an
endomorphism of the semigroup $T$ if and only if one of the following properties holds:
\begin{enumerate}

\item $\phi$ is an automorphism and so $\phi$ is the identity or $\phi=\phi^\tau$;

\item if $T\in\{\POI_n,\PO_n\}$ and $\phi=\phi_\sigma^T$, for some $\sigma\in\S_n$; 

\item there exist idempotents $e,f\in T$ with $e\neq f$ and $ef=fe=f$ such
that $1\phi=e$ and $(T\setminus\{1\})\phi=\{f\}$;

\item $\phi$ is a constant mapping with idempotent value. 

\end{enumerate}
\end{theorem}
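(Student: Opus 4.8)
The plan is to first dispatch sufficiency, which is a direct verification resting entirely on the constructions of Section~\ref{prelim}: the identity and $\phi^\tau$ are endomorphisms (indeed automorphisms); for $S\in\{\POI_n,\PO_n\}$ each $\phi_\sigma^T=\phi_1|_S\phi^\sigma_{I_1^1}$ was already observed to be an endomorphism, giving (2); and the maps in (3) and (4) are instances of the remark in Section~\ref{prelim} on collapsing $S\setminus G$ onto an idempotent $f$ (here $G=\{1\}$), sending $1\mapsto e$ and $(T\setminus\{1\})\mapsto\{f\}$ with $ef=fe=f$, respectively the constant map $e=f$. So the whole content lies in necessity: every $\phi\in\endo(T)$ has one of the four forms.

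The strategy is to exploit three features of $T$. First, its $\mathscr{J}$-classes form the chain $J_0^T<_\mathscr{J}\cdots<_\mathscr{J}J_n^T$ (omitting $J_0^T$ when $T=\O_n$), with top class $J_n^T=\{1\}$. Second, the penultimate class $J_{n-1}^T$ is explicitly described through the idempotents $f_k,g_k$ (for $\O_n$) or $e_i$ (for $\POI_n$). Third, and crucially, $T$ is generated as a semigroup by $\{1\}$ together with the rank-$(n-1)$ idempotents, so an endomorphism is completely determined by the idempotent $e:=1\phi$ and by the images of those idempotents. Since $\phi$ preserves $\leqslant_\mathscr{J}$ and each Green relation, and $J_{n-1}^T$ is a single $\mathscr{J}$-class, its image lies in a single class $J_r^T$; moreover $e$ is a two-sided identity of $T\phi$ (as $s\phi=e\,s\phi=s\phi\,e$), whence $r\leqslant|\im(e)|\leqslant n-1$. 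I would then organise the argument by the value of $r$, together with the comparison $e=1$ versus $e\neq1$.

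I would treat first the extreme $r=n-1$. Using that $\phi$ maps $\mathscr{R}$- to $\mathscr{R}$- and $\mathscr{L}$- to $\mathscr{L}$-related elements and respects the idempotent products (for $\O_n$, $f_kg_k=g_k$ and $g_kf_k=f_k$, and the analogues for $\POI_n$), one shows $\phi$ permutes the rank-$(n-1)$ idempotents bijectively and that $e=1$; since these idempotents together with $1$ generate $T$, it follows that $\phi$ is an automorphism, and the known equality $\auto(T)=\{\id,\phi^\tau\}$ (the automorphism computations recalled in the Introduction) forces (1). At the opposite extreme $r\leqslant1$, every proper element has image of rank $\leqslant1$: if $e=1$, then $\phi$ agrees with the collapse $\phi_1$ up to an automorphism of the rank-$\leqslant1$ part, which by Sullivan's theorem applied to $I_1^1$ is inner, yielding $\phi=\phi_\sigma^T$ as in (2) (available only for $\POI_n,\PO_n$, since for $\O_n$ the target of $\phi_1$ leaves the monoid); if $e\neq1$, then $|\im(e)|<n$ and one checks that $(T\setminus\{1\})\phi$ degenerates to a single idempotent $f$ with $ef=fe=f$, giving (3), or $e=f$ and $\phi$ is constant, giving (4).

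The main obstacle is excluding the intermediate possibilities $2\leqslant r\leqslant n-2$ and, equivalently, proving the rigidity that once $r<n-1$ the rank drops all the way to $1$. This is where the combinatorial structure of $J_{n-1}^T$ does the work: the rank-$(n-1)$ idempotents satisfy a rigid web of multiplicative relations, their pairwise products realising every lower rank in a controlled chain pattern, so that any coincidence among their images --- the failure of injectivity that $r<n-1$ forces --- propagates downward and collapses the entire singular ideal $I_{n-1}^T$ onto rank $\leqslant1$. Making this propagation precise, by tracking the images of the $f_k,g_k$ (or $e_i$) and of their products, is the delicate heart of the proof; once it is in place, the identification of the resulting map with some $\phi_\sigma^T$, with a collapse of type (3), or with a constant of type (4) is bookkeeping built on the constructions of Section~\ref{prelim} and on Sullivan's description of $\auto(I_1^1)$.
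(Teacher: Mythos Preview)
The paper does not contain its own proof of this theorem: it is quoted as a known result from \cite{Fernandes&al:2010} and \cite{Fernandes&al:2019}, so there is no in-paper argument to compare against. I can therefore only assess your proposal on its own terms, and there are two genuine gaps.

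First, your ``crucial'' structural input --- that $T$ is generated by $\{1\}$ together with the rank-$(n-1)$ idempotents --- is false for $T=\POI_n$. The idempotents of $\POI_n$ are exactly the partial identities, and any product of partial identities is again a partial identity; so $\{1,e_1,\ldots,e_n\}$ generates only the semilattice of partial identities, not $\POI_n$. (The paper itself uses the non-idempotent generators $x_0,x_1,\ldots,x_{n-1}$ for $\POI_n$ precisely because no idempotent generating set exists.) Your whole strategy of reading off $\phi$ from the images of the rank-$(n-1)$ idempotents therefore collapses in the $\POI_n$ case.

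Second, your case split by $r$ (the rank such that $J_{n-1}^T\phi\subseteq J_r^T$) is miscalibrated. You place the endomorphisms of type (3) in the bin $r\leqslant 1$ and then say the ``main obstacle'' is to exclude $2\leqslant r\leqslant n-2$. But a type-(3) endomorphism has $(T\setminus\{1\})\phi=\{f\}$ with $f$ an \emph{arbitrary} idempotent satisfying $ef=fe=f$, so $r=|\im(f)|$ can take any value from $0$ (or $1$) up to $n-1$; for instance $e=1$, $f=f_2$ gives $r=n-1$. There is nothing to exclude: intermediate $r$ genuinely occur. What actually distinguishes type (1) from types (3)--(4) is not the value of $r$ but whether $\phi$ is injective on $J_{n-1}^T$, and the clean way to see this is through the congruence lattice. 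Since $\mathrm{Con}(T)$ consists exactly of the Rees congruences $\rho_{I_k^T}$ (Section~\ref{prelim}), one has $\ker(\phi)=\rho_{I_k^T}$ for some $k$; then $k=0$ (identity) forces (1), $k=n$ forces (4), $k=n-1$ forces (3), and the substantive work is to show that $1\leqslant k\leqslant n-2$ can only happen for $T\in\{\POI_n,\PO_n\}$ with $k=n-2$, yielding (2). Your ``rigidity'' intuition belongs here --- it is a statement about which kernels are realised, not about the value of $r$ --- and it needs the explicit relations among the $x_i$ (for $\POI_n$) or the $f_k,g_k$ (for $\O_n$), not merely counting.
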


And, their corollaries: 

\begin{corollary}[{\cite[Theorem 1.2]{Fernandes&al:2010} and \cite[Theorems 3.4 and 3.7]{Fernandes&al:2019}}]\label{sendon} 
Let $n\geqslant2$. Then: 
\begin{enumerate}

\item the semigroup $\O_n$ has 
$2+\sum_{i=0}^{n-1}\binom{n+i}{2i+1}F_{2i+2}$ endomorphisms, 
where $F_{2i+2}$ denotes the $(2i+2)$th Fibonacci number; 

\item the semigroup $\POI_n$ has $2+n!+3^n$ endomorphisms; 

\item the semigroup $\PO_n$ has 
$$\textstyle
3 + n! + (\sqrt{5})^{n-1}\left(\left(\frac{\sqrt{5}+1}{2}\right)^n -\left(\frac{\sqrt{5}-1}{2}\right)^n\right) + 
\sum_{k=1}^{n} (\sqrt{5})^{k-1}\left(\left(\frac{\sqrt{5}+1}{2}\right)^k 
-\left(\frac{\sqrt{5}-1}{2}\right)^k\right)\sum_{i=k}^{n}\binom{n}{i}\binom{i+k-1}{2k-1}
$$ 
endomorphisms. 
\end{enumerate}
\end{corollary}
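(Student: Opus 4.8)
The plan is to read the three cardinalities directly off the classification in Theorem~\ref{endon}. That theorem sorts every endomorphism of $T$ into four families, so the whole problem splits into (i) checking that these families are pairwise disjoint, so that their cardinalities add, and (ii) counting each of them; the genuine work lies in step (ii), and ultimately in the combinatorial evaluation of a single quantity attached to the idempotents of $T$.

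First I would settle the disjointness. Family (1) consists of the two automorphisms, the identity and $\phi^\tau$, which are bijections of $T$. A type-(4) map is constant (image of size $1$) and a type-(3) map has image $\{e,f\}$ of size $2$, so for $n\geqslant2$ neither is bijective, and the families (1), (3), (4) are pairwise disjoint. For $T\in\{\POI_n,\PO_n\}$ the family (2) of the maps $\phi_\sigma^T$ consists of $n!$ pairwise distinct endomorphisms (as recalled in the preliminaries), each with image contained in the proper submonoid $I_1^T\cup\{1\}$ for $n\geqslant3$; such a map is therefore not surjective, hence not of type (1), and its image contains $1$, $\emptyset$ and rank-$1$ elements, hence has size greater than $2$ and the map is not of type (3) or (4). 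Thus family (1) contributes $2$ and, where present, family (2) contributes $n!$.

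The heart of the count is families (3) and (4) together. A type-(3) map is determined by, and determines, an ordered pair $(e,f)$ of \emph{distinct} idempotents with $ef=fe=f$ (one recovers $e=1\phi$ and $f=s\phi$ for any non-unit $s$), while a type-(4) map is a constant map with idempotent value $f$, that is, exactly the degenerate case $e=f$ of the same recipe; conversely, by the construction recalled in the preliminaries, every pair with $ef=fe=f$ yields an endomorphism. Hence families (3) and (4) are jointly enumerated by
$$
N(T)=\bigl|\{(e,f)\in E(T)\times E(T)\mid ef=fe=f\}\bigr|,
$$
the number of pairs $(e,f)$ of idempotents with $f\leqslant e$ in the natural partial order (the case $e=f$ included). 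The corollary then reduces to the three identities
$$
|\endo(\O_n)|=2+N(\O_n),\qquad |\endo(\POI_n)|=2+n!+N(\POI_n),\qquad |\endo(\PO_n)|=2+n!+N(\PO_n),
$$
and it only remains to evaluate $N(T)$ in closed form.

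Finally I would compute $N(T)$ from the combinatorics of order-preserving idempotents. For $\POI_n$ the idempotents are the partial identities $\id_A$, with $\id_B\leqslant\id_A$ if and only if $B\subseteq A$, so that $N(\POI_n)=\sum_{A\subseteq\Omega_n}2^{|A|}=3^n$. For $\O_n$ and $\PO_n$ I would parametrise each order-preserving (partial) idempotent $e$ by its kernel — a partition of its domain into consecutive intervals — together with a choice of a fixed point inside each interval, and then translate the relation $f\leqslant e$ into the two conditions $\im(f)\subseteq\im(e)$ and ``$\ker(e)$ refines $\ker(f)$''. The hard part will be the ensuing double summation over ranks and over the admissible nested interval data: these sums satisfy linear recurrences whose characteristic root is the golden ratio, which is precisely what produces the Fibonacci numbers $F_{2i+2}$ in the formula for $\O_n$ and the powers of $\tfrac{\sqrt5\pm1}{2}$ in the formula for $\PO_n$. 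The $\PO_n$ case is the most delicate, since the partial domains must be tracked throughout, and the isolated constant term in its displayed expression records the single comparable pair contributed by the empty map.
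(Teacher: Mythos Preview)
The paper does not supply its own proof of this corollary: it is quoted verbatim from \cite{Fernandes&al:2010} and \cite{Fernandes&al:2019} as a consequence of Theorem~\ref{endon}, with no further argument. Your strategy---partition the endomorphisms according to the four clauses of Theorem~\ref{endon}, check disjointness, and count each clause---is exactly the route taken in those references, so at the level of method there is nothing to compare.

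That said, your write-up is a sketch rather than a proof, and two points deserve attention. First, the disjointness of families~(1) and~(2) is only argued for $n\geqslant 3$; for $n=2$ the submonoid $I_1^T\cup\{1\}$ equals $T$, and in fact one computes directly that $\phi_1|_{\POI_2}=\phi^\tau$, so $\{\phi_\sigma^{\POI_2}:\sigma\in\S_2\}=\{\id,\phi^\tau\}$ coincides with the automorphism group. You should either treat $n=2$ separately or check carefully how the cited papers handle this boundary case. Second, and more substantially, the evaluations of $N(\O_n)$ and $N(\PO_n)$ are not carried out: you describe the parametrisation of idempotents by interval partitions with marked fixed points and assert that the resulting sums ``satisfy linear recurrences whose characteristic root is the golden ratio'', but you do not set up those recurrences, solve them, or match the solution to the stated closed forms. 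That is where essentially all of the work in \cite{Fernandes&al:2010} and \cite[\S3]{Fernandes&al:2019} lies, and without it the corollary is not proved. Your $N(\POI_n)=3^n$ computation, by contrast, is complete and correct.
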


\smallskip 

Now, let $T\in\{\O_n, \PO_n\}$ and take $M=\langle T, \tau\rangle$. 
Since $T$ is an idempotent-generated semigroup (see \cite{Aizenstat:1962,Gomes&Howie:1992})
and $E(T)=E(M)$, then a restriction to $T$ of any endomorphism of $M$ may also be considered as an endomorphism of $T$. 
On the other hand, Lemma \ref{poirest} below establishes that this last statement is also true for $T=\POI_n$ (and $M=\PODI_n$).    

\smallskip 

Let us consider the following elements of $\POI_n$: 
$$
x_0=\left(
\begin{array}{cccc}
2&\cdots&n-1&n\\
1&\cdots&n-2&n-1
\end{array}
\right) 
\quad\text{and}\quad 
x_i=\left(
\begin{array}{ccc|c|ccc}
1&\cdots&n-i-1&n-i&n-i+2&\cdots&n\\
1&\cdots&n-i-1&n-i+1&n-i+2&\cdots&n
\end{array}
\right), 
$$
for $i\in\{1,2,\ldots,n-1\}$. 
Then $\{1,x_0,x_1,\ldots,x_{n-1}\}$ is a (semigroup) generating set of $\POI_n$. 
Moreover, these transformations satisfy the following equalities: 
\begin{enumerate}
\item $x_ix_0=x_0x_{i+1}$, for $1\leqslant i\leqslant n-2$;
\item  $x_{i+1}x_ix_{i+1}=x_{i+1}x_i$, 
for $1\leqslant i\leqslant n-2$;
\item $x_0x_1\cdots x_{n-1}x_0=x_0$. 
\end{enumerate}
See \cite{Fernandes:2001}.  
Observe that $x_{i+1}x_i,x_0x_{i+1}\in J_{n-2}^{\POI_n}$ for all $1\leqslant i\leqslant n-2$. 

\begin{lemma}\label{poirest} 
Let $n\geqslant2$ and $\phi$ be an endomorphism of $\PODI_n$. 
Then $\POI_n\phi\subseteq\POI_n$.  
\end{lemma}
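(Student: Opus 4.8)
The plan is to exploit the generating set $\{1,x_0,x_1,\ldots,x_{n-1}\}$ of $\POI_n$ recorded just above the statement. Since $\POI_n$ is a submonoid of $\PODI_n$ (a product of order-preserving transformations is order-preserving) and since a homomorphism carries a generated subsemigroup onto the subsemigroup generated by the images, we have $\POI_n\phi=\langle 1\phi,x_0\phi,\ldots,x_{n-1}\phi\rangle$. Hence it suffices to prove that each generator is sent into $\POI_n$: the closure of $\POI_n$ under products then yields $\POI_n\phi\subseteq\POI_n$. First I would dispose of the identity, since $1\phi$ is an idempotent of $\PODI_n$ and $E(\PODI_n)=E(\POI_n)$ gives $1\phi\in\POI_n$ at once. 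So the whole problem reduces to showing $x_i\phi\in\POI_n$ for every $i\in\{0,1,\ldots,n-1\}$.

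Next I would collect the structural constraints that $\phi$ must respect. As an endomorphism it preserves $E$, the Green relations and $\leqslant_\mathscr{J}$; and because $\PODI_n$ is an inverse monoid, $\phi$ also preserves inverses, so $(x_i\phi)(x_i\phi)^{-1}$ and $(x_i\phi)^{-1}(x_i\phi)$ are idempotents lying in $\POI_n$. The crucial dividing line is recorded in the preliminaries: for rank at least $2$ each $\mathscr{H}$-class of $\PODI_n$ has exactly two elements, one order-preserving and one order-reversing, every idempotent being order-preserving, while every element of rank at most $1$ lies in $\POI_n$ automatically. Thus the only way a generator could fail the conclusion is $x_i\phi$ being the order-reversing member of a two-element $\mathscr{H}$-class, i.e. an element of $\PODI_n\setminus\POI_n$ of rank $\geqslant 2$; excluding this is the heart of the matter.

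For the generators $x_1,\ldots,x_{n-1}$ I would first observe that $x_i^2$ is the idempotent obtained by deleting $n-i$ and $n-i+1$ from the identity, so $(x_i\phi)^2=(x_i^2)\phi$ is an idempotent of $\POI_n$; this forces $x_i\phi$ to be a square root of an idempotent. The danger is that an order-reversing involution is equally such a square root, which happens exactly when $\phi$ does not lower the rank from $x_i$ to $x_i^2$, that is, when $\phi$ fails to separate the adjacent $\mathscr{J}$-classes $J_{n-1}^{\PODI_n}$ and $J_{n-2}^{\PODI_n}$. To kill this case I would feed in the remaining defining relations together with $x_0x_1\cdots x_{n-1}x_0=x_0$. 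The latter is rank-preserving: its value $x_0$ has rank $n-1$, all its factors lie in $J_{n-1}^{\PODI_n}$, and rank is non-increasing along products, so the whole computation stays inside the top $\mathscr{J}$-class. Applying $\phi$, the identity $x_0\phi\,x_1\phi\cdots x_{n-1}\phi\,x_0\phi=x_0\phi$ then takes place inside a single $\mathscr{J}$-class of the images (all $x_j\phi$ are $\mathscr{J}$-equivalent), where the order-preserving/order-reversing dichotomy is multiplicative. Comparing the parities imposed by this identity with those imposed by $x_ix_0=x_0x_{i+1}$ and $x_{i+1}x_ix_{i+1}=x_{i+1}x_i$ should leave no room for an order-reversing value.

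The step I expect to be the main obstacle is precisely this collapsing case, where $\phi$ identifies $J_{n-1}^{\PODI_n}$ with lower $\mathscr{J}$-classes so that the image generators all sit in one $\mathscr{J}$-class as involutions; there the easy rank-drop argument via $x_i^2$ evaporates and one must argue with the full relation set, and if necessary with the behaviour of $\tau\phi$ (noting $(\tau\phi)^2=1\phi$), to show that an order-reversing assignment is incompatible with $x_0x_1\cdots x_{n-1}x_0=x_0$. Once every $x_i\phi$ is shown to be order-preserving or of rank at most $1$, the reduction of the first paragraph completes the proof.
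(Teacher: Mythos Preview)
Your reduction to the generators and the parity bookkeeping (order-preserving versus order-reversing being multiplicative on products of rank $\geqslant 2$) are exactly what the paper does in the main case. When $\ell:=|\im((x_ix_0)\phi)|\geqslant 2$, the relations $x_{i+1}x_ix_{i+1}=x_{i+1}x_i$ force $x_{i+1}\phi$ to be order-preserving, then $x_ix_0=x_0x_{i+1}$ propagates this, and finally $x_0x_1\cdots x_{n-1}x_0=x_0$ pins down $x_0\phi$. So far your outline is sound and matches the paper.

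The genuine gap is the case you flag as the ``main obstacle'' but do not resolve: when the images of the $J_{n-2}$-relations collapse, i.e.\ $\ell\leqslant 1$. Here the relation set is powerless. Both sides of $x_ix_0=x_0x_{i+1}$ and of $x_{i+1}x_ix_{i+1}=x_{i+1}x_i$ land in $J_\ell^{\PODI_n}\subseteq\POI_n$, where every element is simultaneously order-preserving and order-reversing, so these relations impose \emph{no} parity constraint at all. The only surviving relation $x_0x_1\cdots x_{n-1}x_0=x_0$ gives merely $\epsilon_0\equiv\epsilon_1+\cdots+\epsilon_{n-1}\pmod 2$, which does not exclude, say, $\epsilon_1=\epsilon_2=1$ and the rest $0$. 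Bringing in $\tau\phi$ does not help either: it only tells you $(\tau\phi)^2=1\phi$, independent of the $x_i\phi$'s. The paper handles this case by an entirely different mechanism: it invokes the known congruence lattice of $\PODI_n$ to see that $\ell\leqslant 1$ (with $n\geqslant 4$) forces $\ker(\phi)\supseteq\rho_{n-2}^{\PODI_n}$, and then a pigeonhole argument on the $n$ idempotents $e_1,\ldots,e_n$ of $J_{n-1}^{\PODI_n}$ (using $\dom(e_i\phi)\cap\dom(e_j\phi)=\dom(f)$ from $e_ie_j\in I_{n-2}^{\PODI_n}$) forces $m=1$, so all $x_i\phi$ have rank $1$ and lie in $\POI_n$ automatically. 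This is the missing idea in your sketch. Note also that the paper treats $n=2$ and $n=3$ separately, since for these small values the parity/relation argument does not go through directly either.
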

\begin{proof}
First, notice that $1\phi$ is an idempotent and so $1\phi\in\POI_n$. 

\smallskip 

Let $n=2$. Then $\tau$ is the only non-order-preserving element of $\PODI_2$. Suppose that $\tau=s\phi$, for some $s\in\POI_2$. 
Hence $s=\binom{1}{2}$ or $s=\binom{2}{1}$ (these are the only non-idempotents of $\POI_2$) 
and so $\emptyset\phi=(s^2)\phi=(s\phi)^2=\tau^2=1$ and 
$\emptyset\phi=(s^3)\phi=(s\phi)^3=\tau^3=\tau$, which is a contradiction. Thus $\POI_2\phi\subseteq\POI_2$. 

\smallskip 

Next, let $n=3$. 
In order to obtain a contradiction, suppose there exists $i\in\{0,1,2\}$ such that $x_i\phi\not\in\POI_3$. 
Then $J_2^{\PODI_3}\phi\subseteq J_m^{\PODI_3}$, for some $m\geqslant2$. 

If $\pi_2\subsetneq\ker(\phi)$ then $\rho_2^{\PODI_3}\subseteq\ker(\phi)$, whence $|I_2^{\PODI_3}\phi|=1$ and so 
$x_i\phi$ is an idempotent, which is a contradiction. Hence $\ker(\phi)\subseteq\pi_2$ and so $\phi$ is injective in 
$J_2^{\POI_3}\cup\{1,\tau\}$. 
Therefore $m=2$, $\tau\phi=\tau$ and $I_1^{\PODI_3}\phi\subseteq I_1^{\PODI_3}$. 

Suppose $i=0$. 
Since $x_0x_1x_2x_0=x_0$, we have $x_0\phi x_1\phi x_2\phi x_0\phi=x_0\phi\not\in\POI_3$, whence 
$x_1\phi x_2\phi\not\in\POI_3$ and so $x_1\phi\in\POI_3$ or $x_2\phi\in\POI_3$. 
Since $\tau x_0 x_1\tau=x_1$ and $\tau x_2x_0\tau=x_2$, 
we have $\tau (x_0\phi)( x_1\phi)\tau=x_1\phi\in J_2^{\PODI_3}$ and $\tau (x_2\phi)(x_0\phi)\tau=x_2\phi\in J_2^{\PODI_3}$. 
If $x_1\phi\in\POI_3$ then $x_1\phi=\tau (x_0\phi)( x_1\phi)\tau\not\in\POI_3$, which is a contradiction. 
If $x_2\phi\in\POI_3$ then $x_2\phi=\tau (x_2\phi)( x_0\phi)\tau\not\in\POI_3$, which is again a contradiction. 
Thus $i\neq0$ and so $x_0\phi\in\POI_3$. 

Now, from $x_0\phi x_1\phi x_2\phi x_0\phi=x_0\phi\in J_2^{\POI_3}$, 
we may conclude that $x_1\phi x_2\phi\in\POI_3$ and so, 
since $x_1\phi\not\in\POI_3$ or $x_2\phi\not\in\POI_3$, we have 
 $x_1\phi, x_2\phi\not\in\POI_3$. 
 
Since $x_1^2$ and $x_2^2$ are idempotents with rank $1$, then $(x_1\phi)^2$ and $(x_2\phi)^2$ are also idempotents and their rank must be $1$ or $0$. 
On the other hand, we may routinely check that 
$$
\left\{x\in J_2^{\PODI_3}\setminus\POI_3\mid\mbox{$x^2$ is an idempotent of rank $1$ or $0$}\right\} = 
\left\{
\left(
\begin{array}{cc}
1&2\\
3&2
\end{array}
\right), 
\left(
\begin{array}{cc}
2&3\\
2&1
\end{array}
\right)
\right\},
$$
whence $\left\{x_1\phi,x_2\phi\right\}
=\left\{
\left(
\begin{array}{cc}
1&2\\
3&2
\end{array}
\right), 
\left(
\begin{array}{cc}
2&3\\
2&1
\end{array}
\right)
\right\}
$
and so $x_1\phi=(x_2\phi)^{-1}=x_2^{-1}\phi$. 
Since $\ker(\phi)\subseteq\pi_2$, it follow that $x_1\mathscr{H}x_2^{-1}$, which is a contradiction. 

Thus $x_i\phi\in\POI_3$ for all $i\in\{0,1,2\}$ and so $\POI_3\phi\subseteq\POI_3$. 

\smallskip 

Now, consider $n\geqslant4$. Let $\ell,m\in\{1,2,\ldots,n\}$ be such that 
$J_{n-2}^{\PODI_n}\phi\subseteq J_\ell^{\PODI_n}$ and $J_{n-1}^{\PODI_n}\phi\subseteq J_m^{\PODI_n}$. 
Then $\ell\leqslant m$. 

\smallskip

We begin by assuming that $\ell\geqslant 2$. 

In order to obtain a contradiction, suppose there exists $i\in\{0,1,\ldots, n-1\}$ such that $x_i\phi\not\in\POI_n$. 

Suppose $i=0$. 
Since $x_0x_1\cdots x_{n-1}x_0=x_0$, we have 
$x_0\phi (x_1\phi\cdots x_{n-1}\phi) x_0\phi=x_0\phi\not\in\POI_n$ and so there must be $j\in\{1,2,\ldots, n-1\}$ such 
$x_j\phi\not\in\POI_n$. 
If $j=n-1$ then $x_{n-2}x_0=x_0x_{n-1}$ implies $x_{n-2}\phi x_0\phi =x_0\phi x_{n-1}\phi\in J_{\ell}^{\PODI_n}$ and so 
$x_{n-2}\phi\not\in\POI_n$, since $\ell\geqslant2$ and $x_0\phi,x_{n-1}\phi\not\in\POI_n$. 
On the other hand, we also have $x_{n-1}x_{n-2}x_{n-1}=x_{n-1}x_{n-2}$, 
whence $x_{n-1}\phi x_{n-2}\phi x_{n-1}\phi=x_{n-1}\phi x_{n-2}\phi\in J_{\ell}^{\PODI_n}$, 
which is a contradiction, since $\ell\geqslant2$ and $x_{n-2}\phi, x_{n-1}\phi\not\in\POI_n$. 
Therefore $j\in\{1,2,\ldots, n-2\}$ and so we have $x_jx_0=x_0x_{j+1}$. 
Hence $x_j\phi x_0\phi=x_0\phi x_{j+1}\phi\in J_{\ell}^{\PODI_n}$. 
Since $\ell\geqslant2$ and $x_0\phi,x_j\phi\not\in\POI_n$, we deduce that also $x_{j+1}\phi\not\in\POI_n$. 
On the other hand, we also have $x_{j+1}x_jx_{j+1}=x_{j+1}x_j$, 
whence $x_{j+1}\phi x_j\phi x_{j+1}\phi=x_{j+1}\phi x_j\phi\in J_{\ell}^{\PODI_n}$, 
which is again a contradiction, since $\ell\geqslant2$ and $x_j\phi, x_{j+1}\phi\not\in\POI_n$. 
Thus $i\neq0$ and so $x_0\phi\in\POI_n$. 

Let $i\in\{1,2,\ldots, n-1\}$ be the smallest index such that $x_i\phi\not\in\POI_n$. 

If $i\geqslant 2$ then $x_{i-1}x_0=x_0x_i$ and so $x_{i-1}\phi x_0\phi=x_0\phi x_i\phi\in J_{\ell}^{\PODI_n}$, 
which is a contradiction, since $\ell\geqslant 2$, $x_0\phi,x_{i-1}\phi\in\POI_n$ and $x_i\phi\not\in\POI_n$. 
Hence $i=1$. From $x_1x_0=x_0x_2$, we get  $x_1\phi x_0\phi=x_0\phi x_2\phi\in J_{\ell}^{\PODI_n}$ and, 
as $x_0\phi\in\POI_n$ and $x_1\phi\not\in\POI_n$, we deduce that $x_2\phi\not\in\POI_n$. 
On the other hand, we also have $x_2x_1x_2=x_2x_1$, whence $x_2\phi x_1\phi x_2\phi=x_2\phi x_1\phi\in J_{\ell}^{\PODI_n}$, 
which is again a contradiction, since $\ell\geqslant 2$ and $x_1\phi,x_2\phi\not\in\POI_n$. 

Thus $x_i\phi\in\POI_n$ for all $i\in\{0,1,\ldots, n-1\}$ and so $\POI_n\phi\subseteq\POI_n$. 

\smallskip 

Now, let $\ell\leqslant1$. 

Suppose that $\ker(\phi)=\pi_k$, with $k\leqslant n-2$, or $\ker(\phi)=\rho_k^{\PODI_n}$, with $k\leqslant n-3$. 
Then $\phi$ is injective in $J_{n-2}^{\POI_n}$ and so 
$|E(J_{n-2}^{\POI_n})|\leqslant |E(J_\ell^{\POI_n})|\leqslant n$, 
which is a contradiction, since $|E(J_{n-2}^{\POI_n})|=\binom{n}{n-2}>n$, for $n\geqslant4$. 

Hence $\ker(\phi)\in\left\{\rho_{n-2}^{\PODI_n},\pi_{n-1},\rho_{n-1}^{\PODI_n},\pi_n,\rho_n^{\PODI_n}=\omega\right\}$.  
Clearly, if $\ker(\phi)\in\left\{\rho_{n-1}^{\PODI_n},\pi_n,\rho_n^{\PODI_n}\right\}$ then $\POI_n\phi\subseteq\POI_n$. 
Therefore, let us admit that $\ker(\phi)\in\left\{\rho_{n-2}^{\PODI_n},\pi_{n-1}\right\}$. 
Then $I_{n-2}^{\PODI_n}\phi=\{f\}$, for some idempotent $f\in J_\ell^{\PODI_n}$, 
and $m\geqslant1$ (recall that $J_{n-1}^{\PODI_n}\phi\subseteq J_m^{\PODI_n}$), since $\phi$ is injective in $J_{n-1}^{\POI_n}$. 

Let $e_1,e_2,\ldots, e_n$ be the $n$ distinct idempotents of $J_{n-1}^{\PODI_n}$ considered in Section \ref{prelim} 
(i.e. $e_i$ is  the partial identity with domain $\Omega_n\setminus\{i\}$, for $i\in\{1,2,\ldots, n\}$). 

Let $i\in\{1,2,\ldots, n\}$. Then $f=\emptyset\phi=(\emptyset e_i)\phi=\emptyset\phi e_i\phi=f(e_i\phi)$, whence 
$\dom(f)\subseteq\dom(e_i\phi)$. Since $(\emptyset,e_i)\not\in\ker(\phi)$ then $f=\emptyset\phi\neq e_i\phi$ and so 
$\dom(f)\subsetneq\dom(e_i\phi)$. 

On the other hand, for $1\leqslant i<j\leqslant n$, we have $e_ie_j\in I_{n-2}^{\PODI_n}$, whence 
$f=(e_ie_j)\phi=e_i\phi e_j\phi$ and so $\dom(f)=\dom(e_i\phi)\cap\dom(e_j\phi)$.  

Take $\ell_i\in\dom(e_i\phi)\setminus\dom(f)$, for $i\in\{1,2,\ldots, n\}$. 
If there exist $1\leqslant i<j\leqslant n$ such that $\ell_i=\ell_j$ then $\ell_i\in(\dom(e_i\phi)\cap\dom(e_j\phi))\setminus\dom(f)=\emptyset$, 
which is a contradiction. Hence, $\ell_i\neq\ell_j$ for all $1\leqslant i<j\leqslant n$ and so 
$\{\ell_1,\ell_2,\ldots,\ell_n\}=\{1,2,\ldots, n\}$. 
Now, since $\ell_i\not\in\dom(f)$ for all $i\in\{1,2,\ldots, n\}$, we conclude that $\dom(f)=\emptyset$, i.e. $f=\emptyset$,  
and $\dom(e_i\phi)\cap\dom(e_j\phi)=\emptyset$ for all $1\leqslant i<j\leqslant n$. 
Moreover, we have  $|\dom(e_i\phi)|=m\geqslant1$ for $i\in\{1,2,\ldots, n\}$. 
Hence $|\dom(e_i\phi)|=1$ for $i\in\{1,2,\ldots, n\}$ and so $m=1$. 
Thus $J_{n-1}^{\PODI_n}\phi\subseteq J_1^{\PODI_n}\subseteq\POI_n$ and so $\POI_n\phi\subseteq\POI_n$, as required.
\end{proof} 

\smallskip 

We may now deduce from Theorem \ref{endon} the following descriptions 
of the endomorphisms of $\OD_n$, $\PODI_n$ and $\POD_n$: 

\begin{theorem} \label{mainthOD}
For $n\geqslant2$, let $M\in\{\OD_n,\PODI_n,\POD_n\}$ and $\phi: M\longrightarrow M$ be any mapping. 
Then $\phi$ is an endomorphism of the semigroup $M$ if and only if one of the following properties holds:
\begin{enumerate}
\item  $\phi$ is the identity or $\phi=\phi^\tau$ and so $\phi$ is an automorphism;

\item if $M\in\{\PODI_n,\POD_n\}$ and $\phi=\phi_\sigma^M$, for some $\sigma\in\cen_{\S_n}(\tau)$; 

\item if $M\in\{\PODI_n,\POD_n\}$ and there exists a non-idempotent group element $h$ of $M$ 
such that $\tau\phi=h$, $1\phi=h^2$ and $(M\setminus\{1,\tau\})\phi=\{\emptyset\}$;

\item there exist a non-idempotent group element $h$ of $M$ and an idempotent transformation $f$ of $M$ 
with rank $1$ such that $hf=fh=f$, 
$\tau\phi=h$, $1\phi=h^2$ and $(M\setminus\{1,\tau\})\phi=\{f\}$;

\item there exist idempotents $e,f\in M$ with $e\neq f$ and $ef=fe=f$ such
that $\{1,\tau\}\phi=\{e\}$ and $(M\setminus\{1,\tau\})\phi=\{f\}$;

\item $\phi$ is a constant mapping with idempotent value.
\end{enumerate}
\end{theorem}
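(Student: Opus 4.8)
The plan is to exploit that $M=\langle T,\tau\rangle$, so that any endomorphism $\phi$ of $M$ is completely determined by the pair $(\phi|_T,\tau\phi)$, and to bootstrap off the classification of $\endo(T)$ in Theorem \ref{endon}. First I would check that $\phi|_T$ is an endomorphism of $T$: for $M=\OD_n$ and $M=\POD_n$ this is because $T=\O_n$, resp. $T=\PO_n$, is idempotent-generated with $E(T)=E(M)$, so $T\phi\subseteq T$; for $M=\PODI_n$ (so $T=\POI_n$) it is precisely the content of Lemma \ref{poirest}. Thus in every case $\phi|_T$ falls into one of the four types of Theorem \ref{endon}, and the whole problem reduces to deciding, for each type, which values of $\tau\phi$ are admissible and what global endomorphism results.

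The key leverage for controlling $\tau\phi$ is that $1$ is the identity of $M$, that $\tau\,\mathscr{H}\,1$ (both are units), and that $(\tau\phi)^2=(\tau^2)\phi=1\phi$. Since endomorphisms preserve $\mathscr{H}$, the element $\tau\phi$ lies in the maximal subgroup $H$ at the idempotent $1\phi$ and is a square root of $1\phi$ there. By the $\mathscr{H}$-class description recalled in Section \ref{prelim}, $H$ has at most two elements, with exactly two precisely when $1\phi$ has rank $\geq2$; in that case the non-idempotent element $h$ is the order-reversing transformation $\mathscr{H}$-related to $1\phi$ with $h^2=1\phi$. Hence $\tau\phi$ is either the idempotent $1\phi$ itself, or this non-idempotent $h$, and the six cases of the theorem will correspond to matching these two possibilities against the four types of $\phi|_T$.

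The case analysis then proceeds as follows. If $\phi|_T$ is an automorphism (type 1) or of the form $\phi_\sigma^T$ (type 2), then $1\phi=1$ has full rank, so $H=\{1,\tau\}$ and $\tau\phi\in\{1,\tau\}$; the value $\tau\phi=1$ is excluded by comparing $(\tau s\tau)\phi$, computed both as $(\phi^\tau(s))\phi$ and as $1\cdot s\phi\cdot1=s\phi$, which would force $\phi|_T=\phi|_T\circ\phi^\tau$, contradicting that $\phi|_T$ separates the rank-$(n-1)$ idempotents $e_i$ from their $\phi^\tau$-images. Thus $\tau\phi=\tau$, yielding $\phi\in\{\id_M,\phi^\tau\}$ in type 1 (case 1); in type 2 the same comparison $(\tau s\tau)\phi=\tau(s\phi)\tau$ reproduces exactly the centralizer computation of Section \ref{prelim} and forces $\sigma\in\cen_{\S_n}(\tau)$, giving $\phi=\phi_\sigma^M$ (case 2). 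If $\phi|_T$ is of type 3, with idempotents $e\neq f$, $1\phi=e$ and $(T\setminus\{1\})\phi=\{f\}$, then either $\tau\phi=e$, and multiplicativity on the order-reversing elements collapses $M\setminus\{1,\tau\}$ onto $f$ (case 5), or $\tau\phi=h$ is the non-idempotent element of $H$. \emph{This last subcase is where I expect the main obstacle.} Multiplicativity on $\tau s$ and $s\tau$ forces $hf=fh=f$, and the equality $fh=f$ says that $h$ fixes every point of $\im(f)$; but $h$ is order-reversing, and an order-reversing self-map of a chain has at most one fixed point, so $|\im(f)|\leq1$. Rank $0$ gives $f=\emptyset$ (case 3) and rank $1$ gives $f$ constant with value the unique fixed point of $h$ (case 4). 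Finally, if $\phi|_T$ is constant (type 4) with value $f$, then $1\phi=f$; the option $\tau\phi=h$ would need $f=h^2$ of rank $\geq2$ while $fh=f$ again forces rank $\leq1$, a contradiction, so $\tau\phi=f$ and $\phi$ is constant (case 6).

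For the converse direction I would verify that each listed datum genuinely defines an endomorphism. Cases 1 and 2 were already constructed in Section \ref{prelim}, and cases 3--6 are instances of the general construction recalled there: an idempotent $f$, a subgroup $H\subseteq\{1\phi,\tau\phi\}$ of $M$ with $fH=\{f\}=Hf$, and a homomorphism from the group of units $G=\{1,\tau\}$ into $H$. The only point that again requires the order-reversing fixed-point remark is the consistency in case 4, namely the existence of a rank-$1$ idempotent $f$ with $hf=fh=f$, which holds exactly when the order-reversing element $h$ possesses its (necessarily unique) fixed point.
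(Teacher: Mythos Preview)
Your plan is correct and follows the paper's strategy exactly: restrict $\phi$ to $T$ (using idempotent-generation for $\O_n,\PO_n$ and Lemma~\ref{poirest} for $\POI_n$), invoke Theorem~\ref{endon}, and in each of its four cases determine $\tau\phi$ using $\tau\mathscr{H}1$ and $(\tau\phi)^2=1\phi$, then propagate to $M\setminus T$ via $s=(s\tau)\tau$. The differences from the paper are only tactical. To exclude $\tau\phi=1$ in types~1--2 you compare $(\tau s\tau)\phi$ computed two ways; the paper instead notes that $(1,\tau)\in\ker\phi$ forces $\pi_n\subseteq\ker\phi$, contradicting injectivity on $T$ (and in type~2 it first pins down $\ker\phi=\pi_{n-1}$ before reading off $\tau\phi=\tau$). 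To force $|\im(f)|\leq1$ in type~3 you use the fixed-point count for an order-reversing map; the paper instead observes that $f=fh$ with $f\in T$ and $h\notin T$ can only hold in rank $\leq1$. In type~4 the paper again goes through the congruence lattice ($\ker\phi\not\subseteq\pi_n$ forces $\ker\phi=\omega$), while you rerun the rank argument. Your route has the mild advantage of not invoking the full description of $\mathrm{Con}(M)$, at the cost of slightly more hands-on computation.

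One cosmetic slip: your exclusion argument in type~1 appeals to ``the rank-$(n-1)$ idempotents $e_i$'', but for $M=\OD_n$ the relevant idempotents of $T=\O_n$ are the $f_k,g_k$, not the partial identities $e_i$; the argument goes through verbatim once you substitute those (since $\phi^\tau$ swaps $f_k\leftrightarrow g_{n-k+2}$). Also, in the converse direction for case~4 you do not need to \emph{produce} a suitable $(h,f)$; the statement only asserts that each listed datum yields an endomorphism, and the fixed-point remark is really only needed later for the enumeration.
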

\begin{proof} Obviously, if Property 1, 2 or 6 holds then $\phi$ is an endomorphism of $M$. 
On the other hand, if Property 3, 4 or 5 holds then, as observed above in a more general context, 
$\phi$ is an endomorphism of $M$. In fact, 
this is immediate for Property 5. 
On the other hand, in the case of Property 3 or 4, $h^2\neq h$, $h^2$ is an idempotent and $h\mathscr{H}h^2$, 
whence $\phi|_{\{1,\tau\}}: \{1,\tau\}\longrightarrow\{h^2,h\}$ is an isomorphism of groups. 
Moreover, for Property 3, we have immediately $\emptyset\{h^2,h\}=\{\emptyset\}=\{h^2,h\}\emptyset$.   
Regarding Property 4, since 
$hf=fh=f$, we clearly have  
$f\{h^2,h\}=\{f\}=\{h^2,h\}f$. 
Therefore, in all cases, $\phi$ is an endomorphism of $M$. 

\smallskip 

Conversely, assume that $\phi: M\longrightarrow M$ is an endomorphism. 
Let $T\in\{\O_n, \POI_n, \PO_n\}$ be such that $M=\langle T, \tau\rangle$. 
Then, as mentioned above, $\phi|_T: T\longrightarrow T$ is an endomorphism of $T$. 

\smallskip 

We start by supposing that $\phi|_T$ is an automorphism of $T$. Then $1\phi=1$ and, 
since $1\mathscr{H}\tau$ implies $1\phi\mathscr{H}\tau\phi$, it follows that $\tau\phi=\tau$. 
In fact, if $\tau\phi=1$ then $(1,\tau)\in\ker(\phi)$ and so 
$\pi_n\subseteq\ker(\phi)$, which contradicts that $\phi|_T$ is an automorphism of $T$, whence $\tau\phi=\tau$.

Let $s$ be any element of $M\setminus T$. Then $s\tau\in T$. Hence, 
if $\phi|_T=\id_T$ then 
$$
s\phi=(s\tau\tau)\phi = (s\tau)\phi \tau\phi = (s\tau)\tau=s
$$
and 
if $\phi|_T=\phi^\tau_T$ then 
$$
s\phi=(s\tau\tau)\phi = (s\tau)\phi \tau\phi = (\tau(s\tau)\tau)\tau=\tau s\tau=(s)\phi^\tau_M. 
$$
Thus $\phi=\id_M$ or $\phi=\phi^\tau_M$. 

\smallskip 

Next, suppose that $T\in\{\POI_n,\PO_n\}$ and $\phi|_T=\phi_\sigma^T$, for some $\sigma\in\S_n$. 

Let $s\in J_{n-2}^T$. Then 
$s\phi=s\phi_\sigma^T=s\phi_1\phi_{I_1^1}^\sigma=
\emptyset\phi_{I_1^1}^\sigma=\sigma^{-1}\emptyset\sigma=\emptyset$. 
Hence 
$J_{n-2}^T\phi=\{\emptyset\}$. 
If $n=2$ then $I_{n-2}^M\phi=J_{n-2}^T\phi=\{\emptyset\}$. 
On the other hand, if $n\geqslant3$ then 
$\phi$ is not injective in $J_{n-2}^T$, whence $\ker(\phi)\not\subseteq\pi_{n-2}$ and so 
$\rho_{n-2}^M\subseteq\ker(\phi)$, from which follows again $I_{n-2}^M\phi=\{\emptyset\}$. 

Now, notice that $\phi$ is injective in $J_{n-1}^T$, whence $\ker(\phi)\subsetneq\rho_{n-1}^M$.  
If $n=2$ then it follows immediately that $\ker(\phi)=\pi_1=\pi_{n-1}$. So, take $n\geqslant3$ 
and let $s\in J_{n-1}^M\setminus J_{n-1}^T$ and $t\in J_{n-1}^T$ be such that $s\mathscr{H}t$. 
Then $s\phi\mathscr{H}t\phi$ and so $s\phi=t\phi$, since $t\phi=t\phi_1\phi_{I_1^1}^\sigma\in J_1^T=J_1^M$. 
Hence $\rho_{n-2}^M\subsetneq\ker(\phi)$ and so we deduce again that $\ker(\phi)=\pi_{n-1}$. 

Then, we have $\tau\phi\neq 1\phi$ and, since $1\phi=1\phi_1\phi_{I_1^1}^\sigma=\sigma^{-1}1\sigma=1=1\phi=(\tau^2)\phi=(\tau\phi)^2$, 
it follows that $\tau\phi=\tau$. 

Let $i,j\in\{1,2,\ldots,n\}$ and take $s\in J_{n-1}^{\POI_n}$ such that $e_i\mathscr{R} s\mathscr{L} e_j$. 
Then $s\phi_1=\binom{i}{j}$ and, since $e_{n-i+1}\mathscr{R} \tau s\mathscr{L} e_j$, we get $(\tau s)\phi_1=\binom{n-i+1}{j}$. 
Hence 
\begin{align*}\textstyle
\binom{(n-i+1)\sigma}{j\sigma}=\sigma^{-1}\binom{n-i+1}{j}\sigma=\binom{n-i+1}{j}\phi_{I_1^1}^\sigma=
(\tau s)\phi_1\phi_{I_1^1}^\sigma=(\tau s)\phi=\qquad\qquad\\ \textstyle
\tau\phi s\phi=\tau (s\phi_1\phi_{I_1^1}^\sigma)=
\tau \left(\binom{i}{j}\phi_{I_1^1}^\sigma\right)=
\tau\sigma^{-1}\binom{i}{j}\sigma=\binom{n-i\sigma+1}{j\sigma}
\end{align*}
and so $(n-i+1)\sigma=n-i\sigma+1$, i.e. $i\tau\sigma=i\sigma\tau$. Thus  $\sigma\in\cen_{\S_n}(\tau)$. 

Therefore, we conclude that $\phi=\phi_\sigma^M$, for some $\sigma\in\cen_{\S_n}(\tau)$. 

\smallskip 

Now, assume there exist idempotents $e,f\in T$ with $e\neq f$ and $ef=fe=f$ such
that $1\phi=e$ and $(T\setminus\{1\})\phi=\{f\}$. 

If $\tau\phi=e$ then, for any $s\in M\setminus(T\cup\{\tau\})$, we have $s=\tau(\tau s)$ and $\tau s\in T\setminus\{1\}$, 
whence $s\phi=\tau\phi(\tau s)\phi=ef=f$ and so $\phi$ is an endomorphism verifying Property 5. 

On the other hand, admit that $\tau\phi\not=e$. Let $h=\tau\phi$. Then $h\neq e$ and $h^2=(\tau\phi)^2=(\tau^2)\phi=1\phi=e$, 
whence $h$ is a non-idempotent group element of $M$ and so $h\not\in T$. 

Let $s$ be a constant transformation of $M$. 
Then $s,s\tau,\tau s\in T\setminus\{1\}$ and so $fh=s\phi\tau\phi=(s\tau)\phi=f=(\tau s)\phi=\tau\phi s\phi=hf$. 
Since $f\in T$ and $h\not\in T$, the equality $f=fh$ (or $f=hf$) allows us to conclude that 
 $f$ has rank $1$, if $T=\O_n$, and that $f$ has rank $1$ or $f$ is the empty transformation, 
if $T\in\{\POI_n,\PO_n\}$. 

Now, let $s\in M\setminus(T\cup\{\tau\})$. Then 
$s=(s\tau)\tau$ and $s\tau\in T\setminus\{1\}$, 
whence $s\phi=(s\tau)\phi\tau\phi=fh=f$. 

Thus $\phi$ is an endomorphism verifying Property 3 or 4. 

\smallskip 

Finally, we admit that $\phi|_T$ is a constant mapping with idempotent value. 
Let $s\in T\setminus\{1\}$. Then $(1,s)\not\in\pi_n$ and $(1,s)\in\ker(\phi)$. Hence $\ker(\phi)=\omega$, i.e. 
$\phi$ is also a constant mapping with idempotent value, as required. 
\end{proof}

As an immediate corollary, we have: 

\begin{corollary}
For $n\geqslant2$, let $M\in\{\OD_n,\PODI_n,\POD_n\}$. Then $\auto(M)=\{\id,\phi^{\tau}\}$.
\end{corollary}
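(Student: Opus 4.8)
The plan is to extract the automorphisms directly from the list of endomorphisms given in Theorem~\ref{mainthOD}: an automorphism is in particular an injective endomorphism, so I would run through Properties 1--6 and discard every family that contains no injective member, expecting only Property~1 to survive. For the easy inclusion, recall from Section~\ref{prelim} that both $\id$ and $\phi^\tau$ are automorphisms of $M$, with $\phi^\tau$ non-trivial since $n\geqslant2$; hence $\{\id,\phi^\tau\}\subseteq\auto(M)$.

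For the reverse inclusion, let $\phi\in\auto(M)$ and examine the six possibilities. The maps of Properties 3--6 are never injective: in each case $M\setminus\{1,\tau\}$ is sent to a single value (and in Properties 5--6 even $\{1,\tau\}$ is collapsed), whereas $M\setminus\{1,\tau\}$ always has at least two elements because the $\mathscr{J}$-class $J_{n-1}^M$ is non-trivial for every $n\geqslant2$. So $\phi$ is not of these forms. For Property~2 (which occurs only when $M\in\{\PODI_n,\POD_n\}$) I would use that $\phi_\sigma^M=\phi_1|_M\,\phi^\sigma_{I_1^\tau}$ with $\phi^\sigma_{I_1^\tau}$ an automorphism, so that $\ker(\phi_\sigma^M)=\ker(\phi_1|_M)$; since $\phi_1$ admits $\pi_{n-1}$ as kernel (Section~\ref{prelim}) and $\pi_{n-1}\neq1$ whenever $n\geqslant3$, no map of type~2 is injective for $n\geqslant3$, again excluding $\phi$.

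The one delicate point, and the main obstacle, is the boundary case $n=2$, where $\pi_{n-1}=\pi_1=1$ and the type~2 maps are genuinely injective; here I must check that they produce nothing new. The plan is to observe that for $n=2$ one has $I_1^\tau=M$ and $\phi_1=\phi^\tau_M$, so that $\phi_\sigma^M=\phi^\tau_M\phi^\sigma_M=\phi^{\tau\sigma}_M$ for each $\sigma\in\cen_{\S_2}(\tau)=\S_2=\{\id,\tau\}$; the two choices then give exactly $\phi^\tau_M$ (for $\sigma=\id$) and $\id$ (for $\sigma=\tau$, since $\tau^2=\id$). Thus even these injective endomorphisms lie in $\{\id,\phi^\tau\}$. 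Combining all cases, only Property~1 can hold for an automorphism, which yields $\phi\in\{\id,\phi^\tau\}$ and hence $\auto(M)=\{\id,\phi^\tau\}$, as claimed.
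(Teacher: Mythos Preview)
Your proposal is correct and follows the approach the paper intends: the corollary is stated there without proof, simply as ``an immediate corollary'' of Theorem~\ref{mainthOD}, so the implicit argument is exactly to run through Properties 1--6 and observe that only Property~1 yields bijections. Your treatment is in fact more careful than the paper's, since you explicitly handle the boundary case $n=2$ where the Property~2 maps are injective and must be identified with $\id$ and $\phi^\tau$; the paper glosses over this overlap.
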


Let $T\in\{\O_n, \POI_n,\PO_n\}$ and take $M=\langle T, \tau\rangle$.  
Notice that,
at the time when Theorem \ref{endon} was proved it was already known that $\auto(T)=\{\id,\phi^{\tau}\}$, for $n\geqslant2$. 
However, for the monoid $M$, it was known until now that $\auto(M)=\{\id,\phi^{\tau}\}$ but only for $n\geq10$. 
See \cite[Theorem 5.4]{Araujo&etal:2011}. 

\smallskip 

Now, we will count the number of endomorphisms of $\OD_n$, $\PODI_n$ and $\POD_n$. 

As above, let $T\in\{\O_n, \POI_n,\PO_n\}$ and $M=\langle T, \tau\rangle$.  

We begin by calculating the number of endomorphisms of $M$ satisfying Property 4 of Theorem \ref{mainthOD}.

Let $h$ be a non-idempotent group element of $M$.  
Then $|\im(h)|\geqslant2$ and $h$ is an order-reversing transformation such that $h^2$ is an idempotent and $h\mathscr{H}h^2$.  
Thus $h$ has fixed points if and only if $|\im(h)|$ is odd and, in this case, it has exactly one. 

On the other hand, we may also conclude that, 
for $2\leqslant i \leqslant n$, the number of non-idempotent group elements of $M$ belonging to $J^{M}_i$ is equal to 
$|E(J_i^T)|$. 

Let $F(h)=\{f\in E(M)\cap J_1^M\mid hf=fh=f\}$. 
Then the number of endomorphisms of $M$ satisfying Property 4 of Theorem \ref{mainthOD} such that $\tau\phi=h$ is $|F(h)|$. 
Notice that, $fh=f$ if and only if $h$ fixes the image of $f$. Then, if $|\im(h)|$ is even, clearly, we have $|F(h)|=0$. 
If $M\in\{\OD_n, \PODI_n\}$ then, for each $j\in\Omega_n$, there exists exactly one idempotent $f$ of $M$ such that $\im(f)=\{j\}$. 
Moreover, for $j\in\Omega_n$, if $jh=j$ and $f$ is the idempotent of $M$ such that $\im(f)=\{j\}$, then $hf=f$. Thus, if 
$|\im(h)|$ is odd and $M\in\{\OD_n, \PODI_n\}$,  it is also clear that $|F(h)|=1$.  
Now, let $M=\POD_n$ and suppose that  $|\im(h)|$ is odd. 
If $f\in F(h)$ then, from we equality $hf=f$, we deduce that $\dom(f)\subseteq\dom(h)$ 
and $x\in\dom(f)$ if and only if $xh\in\dom(f)$, for all $x\in\Omega_n$.  
Therefore, $jh^{-1}\subseteq\dom(f)$, where $j$ is the fixed point of $h$ (and the image of $f$). 
Moreover, for each pair 
$i_1,i_2\in\dom(h)\cap\im(h)$ such that $i_1\neq i_2$, $i_1h=i_2$ and $i_2h=i_1$,
we have $(i_1h^{-1}\cup i_2h^{-1})\cap\dom(f)=\emptyset$ or $(i_1h^{-1}\cup i_2h^{-1})\subseteq\dom(f)$. 
Thus, as we have $\frac{|\im(h)|-1}{2}$ of these pairs, it follows that $|F(h)|= 2^{\frac{|\im(h)|-1}{2}}$. 

Next, for $1\leqslant i\leqslant n$, recall that:
\begin{enumerate}
\item  $|E(J_i^{\O_n})|=\binom{n+i-1}{2i-1}$ (see \cite[Corollary 4.4]{Laradji&Umar:2006}); 

\item $|E(J_i^{\POI_n})|=\binom{n}{i}$, i.e. the number of partial identities of rank $i$;
 
\item $|E(J_i^{\PO_n})|=\sum_{k=i}^{n}\binom{n}{k}\binom{k+i-1}{2i-1}$
(see \cite[Lemma 3.6]{Fernandes&al:2019}).
\end{enumerate}

Therefore, being 
\begin{equation*}
\partial_i=
\begin{cases}
0 & \mbox{if $i$ is even}\\
1 &\mbox{if $i$ is odd and $M\in\{\OD_n, \PODI_n\}$}\\
2^{\frac{i-1}{2}} & \mbox{if $i$ is odd and $M=\POD_n$}, 
\end{cases}
\end{equation*} 
the number of endomorphisms of $M$ satisfying Property 4 of Theorem \ref{mainthOD} is 
\begin{enumerate}
\item $\sum_{i=2}^{n}\binom{n+i-1}{2i-1}\partial_i=\sum_{i=3}^{n}\binom{n+i-1}{2i-1}\partial_i = 
\sum_{i=1}^{\lfloor\frac{n-1}{2}\rfloor}\binom{n+2i}{4i+1}$, 
if $M=\OD_n$;

\item $\sum_{i=2}^{n}\binom{n}{i}\partial_i = \sum_{i=3}^{n}\binom{n}{i}\partial_i = 
\sum_{i=1}^{\lfloor\frac{n-1}{2}\rfloor}\binom{n}{2i+1}$, 
if $M=\PODI_n$; 

\item $\sum_{i=2}^{n}\sum_{k=i}^{n}\binom{n}{k}\binom{k+i-1}{2i-1}\partial_i =
\sum_{i=3}^{n}\sum_{k=i}^{n}\binom{n}{k}\binom{k+i-1}{2i-1}\partial_i =
\sum_{i=1}^{\lfloor\frac{n-1}{2}\rfloor}\sum_{k=2i+1}^{n}\binom{n}{k}\binom{k+2i}{4i+1}2^i$, 
if $M=\POD_n$. 
\end{enumerate}

Regarding the number of endomorphisms of $M\in\{\PODI_n,\POD_n\}$ satisfying Property 3 of Theorem \ref{mainthOD}, 
clearly, it coincides with the number of non-idempotent group elements of $M$, which coincides with the number of idempotents of $T$ with rank greater than or equal to two. Therefore, the number of endomorphisms of $M$ satisfying Property 3 of Theorem \ref{mainthOD} is 
\begin{enumerate}
\item $|E(\POI_n)|-|E(J_1^{\POI_n})|-|E(J_0^{\POI_n})|=2^n-n-1$, if $M=\PODI_n$; 

\item $|E(\PO_n)|-|E(J_1^{\PO_n})|-|E(J_0^{\PO_n})| = |E(\PO_n)| - \sum_{k=1}^{n}\binom{n}{k}\binom{k}{1} -1 = 
1+(\sqrt{5})^{n-1}\left(\left(\frac{\sqrt{5}+1}{2}\right)^n -\left(\frac{\sqrt{5}-1}{2}\right)^n\right) - n2^{n-1}-1=
(\sqrt{5})^{n-1}\left(\left(\frac{\sqrt{5}+1}{2}\right)^n -\left(\frac{\sqrt{5}-1}{2}\right)^n\right) - n2^{n-1}$ 
(see \cite{Laradji&Umar:2004}), if $M=\POD_n$. 
\end{enumerate}

Thus, by applying also Theorem \ref{endon} and Theorem \ref{mainthOD}, we obtain:
\begin{enumerate}
\item $|\endo(\OD_n)|=|\endo(\O_n)|+\sum_{i=1}^{\lfloor\frac{n-1}{2}\rfloor}\binom{n+2i}{4i+1}$;

\item $|\endo(\PODI_n)|=|\endo(\POI_n)|-n! + 
\lfloor\frac{n}{2}\rfloor\hskip-.1em!\, 2^{\lfloor\frac{n}{2}\rfloor} +
(2^n-n-1) + \sum_{i=1}^{\lfloor\frac{n-1}{2}\rfloor}\binom{n}{2i+1}$; 

\item $|\endo(\POD_n)|=|\endo(\PO_n)|-n! + 
\lfloor\frac{n}{2}\rfloor\hskip-.1em!\, 2^{\lfloor\frac{n}{2}\rfloor} +
\left((\sqrt{5})^{n-1}\left(\left(\frac{\sqrt{5}+1}{2}\right)^n -\left(\frac{\sqrt{5}-1}{2}\right)^n\right) - n2^{n-1}\right)+ \\
\sum_{i=1}^{\lfloor\frac{n-1}{2}\rfloor}\sum_{k=2i+1}^{n}\binom{n}{k}\binom{k+2i}{4i+1}2^i$.
\end{enumerate}

Finally, in view of  Corollary \ref{sendon}, we have: 

\begin{corollary} \label{unmOD}
Let $n\geqslant2$. Then: 
\begin{enumerate}
\item the semigroup $\OD_n$ has 
$$\textstyle
2+\sum_{i=0}^{n-1}\binom{n+i}{2i+1}F_{2i+2} + \sum_{i=1}^{\lfloor\frac{n-1}{2}\rfloor}\binom{n+2i}{4i+1}
$$ 
endomorphisms, where $F_{2i+2}$ denotes the $(2i+2)$th Fibonacci number;  

\item the semigroup $\PODI_n$ has 
$$\textstyle
1+2^n+3^n-n+\lfloor\frac{n}{2}\rfloor\hskip-.1em!\, 2^{\lfloor\frac{n}{2}\rfloor}+ \sum_{i=1}^{\lfloor\frac{n-1}{2}\rfloor}\binom{n}{2i+1}
$$
endomorphisms; 

\item the semigroup $\POD_n$ has 
\begin{multline*}\textstyle
3 + \lfloor\frac{n}{2}\rfloor\hskip-.1em!\, 2^{\lfloor\frac{n}{2}\rfloor} - n2^{n-1}+ 
2(\sqrt{5})^{n-1}\left(\left(\frac{\sqrt{5}+1}{2}\right)^n -\left(\frac{\sqrt{5}-1}{2}\right)^n\right) +  \\ \textstyle
\sum_{k=1}^{n} (\sqrt{5})^{k-1}\left(\left(\frac{\sqrt{5}+1}{2}\right)^k 
-\left(\frac{\sqrt{5}-1}{2}\right)^k\right)\sum_{i=k}^{n}\binom{n}{i}\binom{i+k-1}{2k-1}+
\sum_{i=1}^{\lfloor\frac{n-1}{2}\rfloor}\sum_{k=2i+1}^{n}\binom{n}{k}\binom{k+2i}{4i+1}2^i
\end{multline*}
endomorphisms. 
\end{enumerate}
\end{corollary}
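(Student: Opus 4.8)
The plan is to feed the closed forms of Corollary \ref{sendon} into the three identities for $|\endo(M)|$ established immediately above, where $T\in\{\O_n,\POI_n,\PO_n\}$ and $M=\langle T,\tau\rangle$ runs over $\OD_n,\PODI_n,\POD_n$; after that the corollary is pure simplification. So the real content I would isolate is why those three identities hold, and then I would check the arithmetic case by case.

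The identities come from matching the six endomorphism types of Theorem \ref{mainthOD} against the four types of Theorem \ref{endon}, using $E(M)=E(T)$. Properties 1, 5 and 6 of $M$ are in bijection with Properties 1, 3 and 4 of $T$: Property 1 gives the two automorphisms $\id,\phi^\tau$ on both sides; Property 6 gives one constant map per idempotent, hence $|E(M)|=|E(T)|$ on both sides; and Property 5, determined by an idempotent pair $e\neq f$ with $ef=fe=f$, is counted by the very same data as Property 3 of Theorem \ref{endon}. These three families therefore contribute equally to $|\endo(M)|$ and $|\endo(T)|$ and cancel in the difference. What differs is Property 2 --- replacing the $n!$ maps $\phi_\sigma^T$ ($\sigma\in\S_n$) by the $\lfloor\frac{n}{2}\rfloor!\,2^{\lfloor\frac{n}{2}\rfloor}$ maps $\phi_\sigma^M$ ($\sigma\in\cen_{\S_n}(\tau)$) for $T\in\{\POI_n,\PO_n\}$, and none at all for $T=\O_n$ --- together with the two genuinely new families, Properties 3 and 4, whose sizes were already computed above as the number of non-idempotent group elements and as $\sum_i|E(J_i^T)|\partial_i$, respectively.

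With those identities in hand I would substitute. For $\OD_n$ there is no Property 2 adjustment, so the stated formula is just $|\endo(\O_n)|$ plus the Property 4 sum. For $\PODI_n$, inserting $|\endo(\POI_n)|=2+n!+3^n$ cancels the subtracted $n!$ and, collecting $2+3^n+(2^n-n-1)=1+2^n+3^n-n$, yields the displayed expression together with $\lfloor\frac{n}{2}\rfloor!\,2^{\lfloor\frac{n}{2}\rfloor}$ and $\sum_{i}\binom{n}{2i+1}$. For $\POD_n$ the same move uses the three-term value of $|\endo(\PO_n)|$: the $-n!$ again cancels, the single $(\sqrt{5})^{n-1}$-term doubles upon adding the identical term carried by the Property 3 count (producing the factor $2$ in the statement), and the constant together with the double sum pass through unchanged.

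I expect no real obstacle inside the corollary itself beyond careful bookkeeping; the one genuinely delicate point sits in the Property 4 count established above, specifically the value $|F(h)|=2^{(|\im(h)|-1)/2}$ for $M=\POD_n$. That equality is where I would spend the most care: a rank-one idempotent $f$ with $hf=fh=f$ is forced to contain $jh^{-1}$ in its domain at the unique fixed point $j$ of the order-reversing element $h$, while for each of the $\frac{|\im(h)|-1}{2}$ transposed pairs of $h$ one is free to include or omit both preimages, giving the stated power of two. Granting that input, the three displayed endomorphism totals follow by direct substitution and simplification.
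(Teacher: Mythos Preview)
Your proposal is correct and follows essentially the same route as the paper. The paper derives exactly the three identities you describe by matching the families of Theorem \ref{mainthOD} against those of Theorem \ref{endon} (using $E(M)=E(T)$ so that Properties 1, 5, 6 of $M$ are counted by the same data as Properties 1, 3, 4 of $T$), computes the contributions of Properties 3 and 4 separately, and then substitutes the closed forms of Corollary \ref{sendon}; your arithmetic simplifications for each of the three cases match the paper's.
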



\lastpage

\end{document}